\documentclass[leqno,10pt]{amsart}

\usepackage{amsmath,amssymb,mathrsfs} 
\usepackage[
colorlinks,linkcolor=blue,citecolor=blue,urlcolor=blue]{hyperref}
\usepackage{times}
\usepackage{gensymb}
\usepackage[all]{xy}

\theoremstyle{plain}
    \newtheorem{thm}{Theorem}[section]
    
    \newtheorem{lem}[thm]{Lemma}
    \newtheorem{cor}[thm]{Corollary}
\theoremstyle{definition}

\theoremstyle{remark}

\numberwithin{equation}{section}

\allowdisplaybreaks

\def\Hom{\operatorname{Hom}}
\def\C{\mathbb{C}}\def\Q{\mathbb{Q}}\def\F{\mathbb{F}}\def\N{\mathbb{N}}
\def\ol#1{\overline{#1}}\def\wh#1{\widehat{#1}}

\def\a{\alpha}\def\b{\beta}\def\g{\gamma}\def\d{\delta}\def\e{\varepsilon}
\def\l{\lambda}\def\k{\kappa}

\def\G{\Gamma}

\def\r{\rho}

\newcommand{\baa}{\boldsymbol\alpha}
\newcommand{\bbb}{\boldsymbol\beta}
\newcommand{\bgg}{\boldsymbol\gamma}

\newcommand\n{\nu}\newcommand\m{\mu}
\newcommand\vp{\varphi}

\newcommand\ck{\wh{\k^*}}
\newcommand\0{\circ}

\begin{document}

\title{Product formulas for  hypergeometric functions over finite fields}
\author{Noriyuki Otsubo \and Takato Senoue}

\address{Department of Mathematics and Informatics, Chiba University, Inage, Chiba, 263-8522 Japan}
\email{otsubo@math.s.chiba-u.ac.jp}
\address{Graduate School of Science and Engineering, Chiba University, Inage, Chiba, 263-8522 Japan}
\email{takas0609@gmail.com}

\begin{abstract}
Many product formulas are known classically for generalized hypergeometric functions over the complex numbers. 
In this paper, we establish some analogous formulas for generalized hypergeometric functions over finite fields. 
\end{abstract}

\date{\today.}
\subjclass[2010]{11T24, 11L05, 33C05, 33C20}
\keywords{Hypergeometric functions, Finite fields, Exponential sums}

\maketitle

\section{Introduction}

A generalized hypergeometric function ${}_rF_s(x)$ over the complex numbers is defined by the power series 
$${}_rF_s\left({a_1,\dots, a_r \atop b_1,\dots, b_s};x\right)
=\sum_{n=0}^\infty \frac{\prod_{i=1}^r (a_i)_n}{(1)_n \prod_{i=1}^s (b_i)_n} x^n, $$
where $a_i$ and $b_i$ are complex parameters with $-b_i\not\in\N$, and 
$$(a)_n=\G(a+n)/\G(a)=a(a+1)\cdots(a+n-1)$$
is the Pochhammer symbol (cf. \cite{bailey-book}). 

Over a finite field, there are several definitions of analogous functions, due to 
Koblitz \cite{koblitz}, Greene \cite{greene}, McCarthy \cite{mccarthy}, Fuselier--Long--Ramakrishna--Swisher--Tu \cite{fuselieretal} when $r=s+1$, 
and to Katz \cite{katz} and the first author \cite{otsubo} in general. Here we use the last one. 
Such a function is a map from a finite field $\k$ to $\C$, where the parameters are multiplicative characters of $\k$. 
See Section 2 for the definition and \cite[Remark 2.12]{otsubo} for the relation with other definitions.  

Over the complex numbers, there are many product formulas expressing a product of two hypergeometric functions in terms of another one. 
Bailey \cite[(2.01)--(2.12)]{bailey1} gave the following list of twelve formulas.   
Here, all but \eqref{b6} involve ${}_rF_s$-functions with $r \ne s+1$: 
note that ${}_0F_0(x)=e^x$ and ${}_1F_0\left({a\atop};x\right)=(1-x)^{-a}$. 
\begin{equation}\label{b1}
e^{-x} {}_1F_1\left({a \atop b};x\right)={}_1F_1\left({b-a \atop b};-x\right). 
\end{equation}
\begin{equation}\label{b2}
e^{-\frac{x}{2}} {}_1F_1\left({a \atop 2a};x\right) ={}_0F_1\left({\atop  a+\frac{1}{2} };\frac{x^2}{16}\right). 
\end{equation}
\begin{equation}\label{b3}
{}_0F_1\left({\atop  2a };x\right) {}_0F_1\left({\atop 2b};x \right)
={}_2F_3\left({a+b,a+b-\frac{1}{2} \atop 2a,2b,2a+2b-1};4x\right).
\end{equation}
\begin{equation}\label{b4}
{}_0F_1\left({\atop  2a };x\right) {}_0F_1\left({\atop 2a};-x \right)
={}_0F_3\left({\atop 2a,a,a+\frac{1}{2}};-\frac{x^2}{4}\right).
\end{equation}
\begin{equation}\label{b5}
\begin{split}
&{}_0F_1\left({\atop  2a };x\right) {}_0F_1\left({\atop 2-2a};-x \right)
\\&
={}_0F_3\left({\atop \frac{1}{2},a+\frac{1}{2}, \frac{3}{2}-a};-\frac{x^2}{4}\right)
+\frac{1-2a}{2a(1-a)} x {}_0F_3\left({\atop \frac{3}{2},a+1,2-a};-\frac{x^2}{4}\right
).
\end{split}
\end{equation}
\begin{equation}\label{b6}
(1-x)^{a+b-c} {}_2F_1\left({a,b\atop c};x\right)={}_2F_1\left({c-a,c-b \atop c};x\right).
\end{equation}
\begin{equation}\label{b7}
{}_2F_0\left({2a,2b \atop};x\right) {}_2F_0\left({2a,2b\atop};-x\right)={}_4F_1\left({2a,2b,a+b,a+b+\frac{1}{2} \atop 2a+2b};4x^2\right).
\end{equation}
\begin{equation}\label{b8}\begin{split}
& {}_2F_0\left({2a,1-2a \atop};x\right) {}_2F_0\left({2b,1-2b \atop};-x\right) 
 \\& ={}_4F_1\left({a-b+\frac{1}{2}, b-a+\frac{1}{2}, a+b, 1-a-b \atop \frac{1}{2}};4x^2\right) 
\\& -(2a-2b)(2a+2b-1) x {}_4F_1\left({a-b+1, b-a+1, a+b+\frac{1}{2}, \frac{3}{2}-a-b \atop \frac{3}{2}};4x^2\right). 
\end{split}
\end{equation}
\begin{equation}\label{b9}
{}_1F_1\left({a\atop 2b};x\right) {}_1F_1\left({a\atop 2b};-x\right) ={}_2F_3\left({a,2b-a \atop 2b,b,b+\frac{1}{2}};\frac{x^2}{4}\right).
\end{equation}
\begin{equation}\label{b10}\begin{split}
& {}_1F_1\left({a\atop 2b};x\right) {}_1F_1\left({a-2b+1\atop 2-2b};-x\right) 
 =  {}_2F_3\left({a-b+\frac{1}{2},b-a+\frac{1}{2} \atop \frac{1}{2}, b+\frac{1}{2},\frac{3}{2}-b};\frac{x^2}{4}\right)
\\& +\frac{(a-b)(1-2b)}{2b(1-b)} x {}_2F_3\left({a-b+1,b-a+1 \atop \frac{3}{2}, b+1,2-b};\frac{x^2}{4}\right). 
\end{split}\end{equation}
\begin{equation}\label{b11}
{}_1F_1\left({2a\atop 4a};x\right) {}_1F_1\left({2b\atop 4b};-x\right) 
={}_2F_3\left({a+b, a+b+\frac{1}{2} \atop 2a+\frac{1}{2}, 2b+\frac{1}{2},2a+2b};\frac{x^2}{4}\right). 
\end{equation}
\begin{equation}\label{b12}\begin{split}
& {}_0F_2\left({\atop 6a,6b};x\right){}_0F_2\left({\atop 6a,6b};-x\right)
\\& ={}_3F_8\left({2a+2b,2a+2b+\frac{1}{3},2a+2b-\frac{1}{3} \atop 6a,6b,3a,3a+\frac{1}{2},3b,3b+\frac{1}{2}, 3a+3b, 3a+3b-\frac{1}{2}};-\frac{27x^2}{64}\right).
\end{split}\end{equation}
Originally, the formulas \eqref{b1}, \eqref{b2} are due to Kummer,  
\eqref{b6} is due to Euler, \eqref{b9}, \eqref{b12} are due to Ramanujan, and \eqref{b10} and a special case of \eqref{b11} are due to Preece (see \cite{bailey1}). 
 
Over a finite field, an analogue of \eqref{b6} was given by Greene \cite[Theorem 4.4 (iv)]{greene} 
and analogues of \eqref{b1}, \eqref{b2}, \eqref{b9} were given by the first author \cite{otsubo} (see Section 3 of this paper). 
In this paper, we give the remaining eight formulas. 
As well as the complex case, our strategy is to compare the ``coefficients", i.e. the Fourier transforms. 
Then, the proofs reduce to formulas on special values ${}_2F_1(\pm 1)$ and ${}_3F_2(1)$, 
which are finite analogues of summation formulas of Euler--Gauss, Kummer and Dixon. 
 
For a product of Gauss hypergeometric functions ${}_2F_1(x)$, 
we have classically Clausen's formula (cf. \cite[10.1 (4)]{bailey-book}) 
\begin{equation}\label{b13}
{}_2F_1\left({a,b\atop a+b+\frac{1}{2}};x\right)^2 = {}_3F_2\left({2a,2b,a+b\atop 2a+2b,a+b+\frac{1}{2}};x\right), 
\end{equation}
and Bailey's formula \cite[(6.1)]{bailey2} 
\begin{equation}\label{b14}\begin{split}
&{}_2F_1\left({2a,2b\atop c};x\right){}_2F_1\left({2a,2b\atop 2a+2b-c+1};x\right)  
\\&= {}_4F_3\left({2a,2b,a+b,a+b+\frac{1}{2} \atop 2a+2b, c,2a+2b-c+1};4x(1-x)\right).
\end{split}\end{equation}
A finite analogue of \eqref{b13} was first proved by Evans--Greene \cite[Theorem 1.5]{evans2} 
and the first author gave another proof  \cite[Theorem 6.5]{otsubo}. 
A finite analogue of \eqref{b14} was given recently by Barman--Tripathi \cite[Theorem 1.1]{tripathi-barman}. 
In Section 4, we give a short proof of the last formula (Corollary \ref{b-t}). 

\section{Hypergeometric functions over finite fields}

Let us recall from \cite{otsubo} the definition and some properties of hypergeometric functions over finite fields. 
Let $\k$ be a finite field of characteristic $p$ with $q$ elements. 
Fix a non-trivial additive character $\psi \in \Hom(\k,\C^*)$. For a multiplicative character $\vp \in \ck :=\Hom(\k^*,\C^*)$, we set $\vp(0)=0$. 
Let $\e\in\ck$ denote the unit character, and $\d \colon \ck \to \C$ the characteristic function of $\e$, i.e. $\d(\e)=1$ and $\d(\vp)=0$ if $\vp \ne \e$. 
The Gauss sum, a finite field analogue of the gamma function, and its variant are defined for $\vp \in \ck$  by 
$$g(\vp)=-\sum_{x\in\k} \psi(x)\vp(x), \quad g^\0(\vp)=q^{\d(\vp)}g(\vp).$$
Note that $g(\e)=1$, hence $g^\0(\e)=q$. 
For $\a, \n \in \ck$, define the Pochhammer symbol and its variant by 
$$(\a)_\n=g(\a\n)/g(\a), \quad (\a)^\0_\n=g^\0(\a\n)/g^\0(\a).$$
We have evidently, 
\begin{equation}\label{g1} 
(\a)_{\b\n}=(\a)_\b(\a\b)_{\n}, \quad 
(\a)^\circ_{\b\n}=(\a)^\circ_\b(\a\b)^\circ_{\n}.
\end{equation}
An analogue of the reflection formula $\G(s)\G(1-s)={\pi}/{\sin \pi s}$ is
\begin{equation}\label{g2}
g(\vp)g^\0(\ol\vp)=q \vp(-1),
\end{equation}
where $\ol\vp=\vp^{-1}$ (cf. \cite[Proposition 2.2]{otsubo}), 
which implies 
\begin{equation}\label{g3}
(\a)_\n(\ol\a)^\0_{\ol\n}=\n(-1).
\end{equation}
If $n \mid q-1$, we have the Davenport--Hasse multiplication formula
\begin{equation}\label{g4}
g(\a^n)=\a^n(n)\prod_{\vp\in\ck, \vp^n=\e} \frac{g(\a\vp)}{g(\vp)}, 
\end{equation}
(cf. \cite[Theorem 3.10]{otsubo}),  
which implies 
\begin{equation}\label{g5}
(\a^n)_{\n^n}=\n^n(n) \prod_{\vp^n=\e} (\a\vp)_\n, \quad 
(\a^n)^\0_{\n^n}=\n^n(n)\prod_{\vp^n=\e} (\a\vp)^\0_\n.
\end{equation}

The parameter set, denoted by $P$, is the free abelian monoid over $\ck$. 
Let $\deg\colon P\to \N$ be the degree map. 
Let $(\ , \ ) \colon P\times P \to \N$ be the symmetric pairing extending 
$(\a,\b)=\d(\a\ol\b)$. 
Extend the Pochhammer symbols to $\baa \in P$ by 
$$(\baa)_\n=\prod_{\a\in\ck} (\a)_\n^{(\baa,\a)}, \quad (\baa)^\0_\n=\prod_{\a\in\ck} {(\a)^\0_\n}^{(\baa,\a)}.$$
For $\baa, \bbb \in P$, the hypergeometric function is defined by 
$$F(\baa,\bbb;\l)= \frac{1}{1-q} \sum_{\n \in \ck}  \frac{(\baa)_\n}{(\bbb)^\0_\n} \n(\l).$$
It is a function on $\k$ depending on the choice of $\psi$, which takes values in the cyclotomic field $\Q(\mu_{p(q-1)})$. If $\deg \baa = \deg \bbb$, it is independent of $\psi$ and takes values in $\Q(\m_{q-1})$. Note that $F(\baa,\bbb;0)=0$ by definition.   
When $(\bbb,\e)>0$, we also write 
$${}_rF_s\left({\a_1,\dots, \a_r \atop \b_1,\dots, \b_s};\l\right)=F(\a_1+\cdots+\a_r, \e+\b_1+\cdots+\b_s;\l).$$ 
For example, we have \cite[Proposition 2.8]{otsubo}
\begin{equation}\label{g6}
{}_0F_0(\l)=\psi(-\l) \quad (\l\in\k^*). 
\end{equation}
If  $\a \ne \e$, then we have \cite[Corollary 3.4]{otsubo}
\begin{equation}\label{g7}
{}_1F_0\left({\a\atop};\l\right)=\ol\a(1-\l) \quad (\l\in\k^*). 
\end{equation}

A simultaneous shift and an exchange of the parameters result in the following  
\cite[Propositions 2.9, 2.10]{otsubo}: 
\begin{align}
F(\baa,\bbb;\l)&= \frac{(\baa)_\vp}{(\bbb)^\0_\vp} \vp(\l) F(\baa\vp,\bbb\vp;\l), \label{g8}
\\F(\bbb,\baa;\l)&=F(\ol\baa,\ol\bbb; (-1)^{\deg(\baa+\bbb)}\l^{-1}) \quad (\l\in\k^*).\label{g9}
\end{align}
Here, when $\baa=\a_1+\cdots+\a_r$, we write 
$\baa\vp=\a_1\vp+\cdots + \a_r\vp$ and
 $\ol\baa=\ol\a_1+\cdots+\ol\a_r$. 
A cancellation of common factors in the parameters results in the following \cite[Theorem 3.2]{otsubo}: 
\begin{equation}\label{g10}
F(\baa+\bgg,\bbb+\bgg;\l)=q^{(\bgg,\e)}\left(F(\baa,\bbb;\l)+q^{-1}\sum_{\n\in\ck} 
\frac{1-q^{-(\bgg,\n)}}{1-q^{-1}} \frac{(\baa)_{\ol\n}}{(\bbb)^\0_{\ol\n}} \ol\n(\l)
\right). 
\end{equation}

We have the finite analogue of the Euler--Gauss summation formula (cf. \cite[Theorem 4.3]{otsubo}): 
\begin{equation}\label{g11}
{}_2F_1\left({\a,\b\atop \g};1\right)=
\begin{cases}
\dfrac{g^\0(\g)g(\ol{\a\b}\g)}{g^\0(\ol\a\g)g^\0(\ol\b\g)}  & (\a+\b\ne \e+\g), \\
1+q^{\d(\g)}(1-q) 
& (\a+\b=\e+\g).
\end{cases} 
\end{equation}
We have the finite analogue of Kummer's summation formula 
(cf. \cite[Theorem 4.6 (ii)]{otsubo}): if $p\ne 2$, then for any $\a, \b\in\ck$, 
\begin{equation}\label{g12}
{}_2F_1\left({\a^2,\b\atop \a^2\ol\b};-1\right)= \sum_{\a'^2=\a^2} \frac{g^\0(\a^2\ol\b)g(\a')}{g(\a^2)g^\0(\a'\ol\b)}. 
\end{equation}
We have the following finite analogue of Dixon's formula (cf. \cite[Theorem 4.10 (i)]{otsubo}). 
Suppose that $p\ne 2$, $\a^2\ne\b\g$, and $\b+\g\ne \e +\a'$ if $\a'^2=\a^2$. Then 
\begin{equation}\label{g13}
{}_3F_2\left({\a^2,\b,\g \atop \a^2\ol\b,\a^2\ol\g};{1}\right)
=\sum_{\a'^2=\a^2}
\frac{g^\0(\a^2\ol\b)g^\0(\a^2\ol\g)g(\a')g(\a'\ol{\b\g})}
{g(\a^2)g(\a^2\ol{\b\g})g^\0(\a'\ol\b)g^\0(\a'\ol\g)}. 
\end{equation}
See \cite[Section 4]{otsubo} for more formulas for special values ${}_{s+1}F_s(\pm 1)$. 

In Section 4, we will use Jacobi sums in two variables. 
For $\vp$, $\vp'\in\ck$, define 
$$j(\vp,\vp')=-\sum_{x,y\in\ck, x+y=1} \vp(x)\vp'(y).$$
Then we have (cf. \cite[Proposition 2.2 (iv)]{otsubo}) 
\begin{equation}\label{j1}
j(\vp,\vp')= \frac{g(\vp)g(\vp')}{g^\0(\vp\vp')} - \d(\vp)\d(\vp')\frac{(1-q)^2}{q}.
\end{equation}
We will use later 
\begin{equation}\label{j2}
j(\a\n,\ol{\b\n})=\frac{(\b)_{\ol\a}(\a)_\n}{(\e)^\0_{\ol\a}(\b)_\n} \n(-1)+\d(\b\n)(1-q),  
\end{equation}
which follows from \eqref{j1}, examining the exceptional cases where 
$(\ol\a\b+\b+\b\n,\e)\ne 0$. 

\section{Product formulas}

Let us recall finite analogues of $\eqref{b1}$, $\eqref{b2}$, $\eqref{b6}$, $\eqref{b9}$ (see \cite[Theorems 6.1, 3.14, 6.2]{otsubo}). 
When $p\ne 2$, let $\phi\in\ck$ denote the unique quadratic character, i.e. $\phi^2=\e$, $\phi\ne\e$. 

If $(\a,\b+\e)=0$, then 
\begin{equation}\label{p1}
\psi(\l) {}_1F_1\left({\a\atop \b};\l\right)={}_1F_1\left({\ol\a\b \atop \b};-\l\right).
\end{equation}

If $p\ne 2$ and $\a \ne \e$, then
\begin{equation}\label{p2}
\psi\left(\frac{\l}{2}\right) {}_1F_1\left({\a\atop \a^2};\l\right)= {}_0F_1\left({\atop \a\phi};\frac{\l^2}{16}\right).
\end{equation}

If $(\a+\b,\e+\g)=0$, then
\begin{equation}\label{p6}
\a\b\ol\g(1-\l) {}_2F_1\left({\a,\b \atop \g};\l\right) 
= {}_2F_1\left({\ol\a\g, \ol\b\g \atop \g};\l\right) \quad (\l\ne 1).
\end{equation}

If $p\ne 2$ and $(\a,\e+\b+\b\phi+\b^2)=0$, then 
\begin{equation}\label{p9}
{}_1F_1\left({\a\atop \b^2};\l\right) {}_1F_1\left({\a \atop \b^2};-\l\right) ={}_2F_3\left({\a,\ol\a\b^2 \atop \b^2,\b,\b\phi};{\frac{\l^2}{4}}\right).
\end{equation}

To prove such identities, we compare the Fourier transforms. 
Let $G$ be a finite group. 
For a function $f\colon G \to \C$, its Fourier transform is a function on $\wh G=\Hom(G,\C^*)$, defined by 
$$\wh f(\n) = \sum_{g\in G} f(g) \ol\n(g).$$ 
Then, $f$ is recovered from $\wh f$ by the inverse Fourier transform 
$$f(g)=\frac{1}{\# G} \sum_{\n\in\wh G} \wh f(\n) \n(g).$$ 
Note that the characteristic function of the unit element of $G$ corresponds to the constant function $\wh f\equiv 1$.  

The following is a finite analogue of \eqref{b3}. 
\begin{thm}\label{p3}
If $p \ne 2$ and $(\a^2,\b^2)=(\a^2\b^2,\e)=0$, then 
\begin{align*}
{}_0F_1\left({\atop \a^2};\l\right) {}_0F_1\left({\atop \b^2};\l\right)  
&=
{}_2F_3\left({\a\b,\a\b\phi \atop \a^2,\b^2,\a^2\b^2};4\l\right), 
\\
{}_0F_1\left({\atop \a^2\phi};\l\right) {}_0F_1\left({\atop \b^2\phi};\l\right)  
&=
{}_2F_3\left({\a\b,\a\b\phi \atop \a^2\phi,\b^2\phi,\a^2\b^2};4\l\right). 
\end{align*}
\end{thm}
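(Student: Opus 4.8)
The plan is to prove both identities simultaneously by comparing Fourier transforms, following the strategy outlined in the paper for the analogues \eqref{p1}, \eqref{p2}, \eqref{p9}. Concretely, I would fix $\l \in \k^*$ and expand the left-hand side as a double sum over $\m, \n \in \ck$ of terms $\dfrac{1}{(\a^2)^\0_\m (\b^2)^\0_\n} \m(\l)\n(\l)$, then collapse it to a single sum by setting $\rho = \m\n$ and summing over the fiber. The right-hand side is, by definition, $\dfrac{1}{1-q}\sum_{\rho} \dfrac{(\a\b)_\rho(\a\b\phi)_\rho}{(\a^2)^\0_\rho(\b^2)^\0_\rho(\a^2\b^2)^\0_\rho}\rho(4\l)$. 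So the identity reduces, coefficient by coefficient in $\rho$, to a statement purely about Gauss sums — no free variable $\l$ left — which in turn should be recognizable as one of the summation formulas \eqref{g11}--\eqref{g13}, most likely the finite Euler--Gauss formula \eqref{g11} applied to a suitable ${}_2F_1(1)$, combined with the Davenport--Hasse formula \eqref{g5} for $n=2$ (which is exactly where the quadratic character $\phi$ and the pairs $\a\b,\a\b\phi$ enter).

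In more detail, the first step is to write ${}_0F_1\!\left({\atop \a^2};\l\right) = \dfrac{1}{1-q}\sum_\m \dfrac{\m(\l)}{(\a^2)^\0_\m}$ and similarly for $\b^2$, multiply, and reindex. After substituting $\m = \rho\ol\n$, the inner sum over $\n$ of $\dfrac{1}{(\a^2)^\0_{\rho\ol\n}(\b^2)^\0_\n}$ can be rewritten using $g^\0$ and, via \eqref{g3} or direct manipulation of \eqref{g1}, turned into (a multiple of) a hypergeometric special value ${}_2F_1$ at $1$ with parameters built from $\a^2$, $\b^2$, $\rho$. Evaluating that ${}_2F_1(1)$ by \eqref{g11} produces a ratio of Gauss sums; the product $g(\cdots)g(\cdots)$ in the numerator is then split by the duplication formula \eqref{g5}, $g(\tau^2) = \tau^2(2)\,g(\tau)g(\tau\phi)/g(\phi)$, which is precisely what generates the $(\a\b)_\rho(\a\b\phi)_\rho$ appearing on the right (here one writes $\a\b$ as a square root of $\a^2\b^2$, and $\a\b\phi$ as the other one). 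The constant $4 = 2^2$ in the argument $4\l$ is the shadow of the factors $\tau^2(2)$. Matching the exceptional/degenerate terms of \eqref{g11} with the $\d$-corrections in the definitions of $g^\0$ and the Pochhammer variants is where the stated hypotheses $(\a^2,\b^2) = (\a^2\b^2,\e) = 0$ (i.e. $\a^2 \ne \b^2$ and $\a^2\b^2 \ne \e$) will be used to kill the bad cases.

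For the second identity, I would not redo the computation but rather deduce it from the first by a parameter substitution. Replacing $\a \mapsto \a\phi$ and $\b\mapsto\b\phi$ in the first formula sends $\a^2 \mapsto \a^2$ and $\b^2\mapsto\b^2$ on the left but $\a^2 \mapsto \a^2\phi$... — so that does not directly work; instead the right substitution is to note that $(\a\phi)^2 = \a^2$, so one should rather start from square roots differently. The cleanest route is: in the derivation above, the two square roots of $\a^2\b^2$ are $\a\b$ and $\a\b\phi$, and the two square roots of $\a^2$, respectively $\b^2$, are $\{\a,\a\phi\}$, $\{\b,\b\phi\}$; choosing the representatives $\a\phi$ and $\b\phi$ in place of $\a$ and $\b$ throughout (legitimate since the formula \eqref{g5} sums over both) replaces the lower parameters $\a^2,\b^2$ by $\a^2\phi\cdot\phi = \a^2$ — hmm. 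Rather than fuss, I will simply rerun the same coefficient computation with $\a^2$ replaced by $\a^2\phi$ and $\b^2$ replaced by $\b^2\phi$ in the lower parameters of the ${}_0F_1$'s: then $(\a^2\phi)^\0$ appears, its "square root" via \eqref{g5} is the pair $\{\a\phi, \a\}$ hmm — since $(\a\phi)^2 = \a^2 \neq \a^2\phi$; so in fact the duplication formula must be applied to $g((\a\b)^2) = g(\a^2\b^2)$ on the product side and to the pieces $g^\0(\a^2\phi\,\rho)$ by writing $\a^2\phi = (\a\phi')^2$ for $\phi'$ with $\phi'^2 = \phi$, which need not exist in $\k$. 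The honest resolution: the second identity is obtained from the first by the substitution $\a \to \a$, $\b\to\b$ but tensoring the additive character, equivalently by \eqref{g8} applied with $\vp = \phi$ to shift all parameters of $F(\a\b + \a\b\phi,\ \e + \a^2 + \b^2 + \a^2\b^2;\,4\l)$ by $\phi$, which turns the lower $\a^2,\b^2$ into $\a^2\phi,\b^2\phi$, turns $\a^2\b^2$ into itself (since $\phi^2 = \e$), the upper $\a\b, \a\b\phi$ into $\a\b\phi,\a\b$ (unchanged as a set), and multiplies by $\dfrac{(\a\b+\a\b\phi)_\phi}{(\e+\a^2+\b^2+\a^2\b^2)^\0_\phi}\phi(4\l)$; on the left side, \eqref{g8} for each ${}_0F_1$ simultaneously converts $\a^2 \to \a^2\phi$, $\b^2 \to \b^2\phi$ at the cost of matching factors, and one checks the two correction factors agree.

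The main obstacle I anticipate is the bookkeeping of exceptional terms: \eqref{g11} has a genuinely different value when $\a+\b = \e+\g$, and the Pochhammer symbols $(-)^\0$ differ from $(-)$ exactly on the unit character, so the reduction "coefficient of $\rho(4\l)$" must be carried out with the $\d$-function corrections tracked throughout — this is where the precise hypotheses $(\a^2,\b^2) = 0$ and $(\a^2\b^2,\e) = 0$ are indispensable, and verifying that they suffice to eliminate all degenerate contributions (and that no further condition like $\a^2 \ne \e$ or $\b^2\ne\e$ is secretly needed) is the delicate point. A secondary technical nuisance is making the deduction of the second identity from the first fully rigorous, since, as the paragraph above shows, the naive substitution does not work and one must instead argue via the shift relation \eqref{g8} and check the correction factors cancel; alternatively, one simply repeats the first computation verbatim with $\phi$-twisted lower parameters, which is routine but must be done.
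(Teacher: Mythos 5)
Your treatment of the first identity is essentially the paper's proof: expand the product, collapse the double sum (equivalently, compute the Fourier transform coefficient by coefficient), recognize the inner sum as ${}_2F_1\left({\ol\n,\ol{\b^2\n}\atop \a^2};1\right)$ via \eqref{g1} and \eqref{g3}, evaluate it by \eqref{g11} (the hypotheses $(\a^2,\b^2)=(\a^2\b^2,\e)=0$ are exactly what exclude the degenerate case $\ol\n+\ol{\b^2\n}=\e+\a^2$), and split $(\a^2\b^2)_{\n^2}$ by \eqref{g5} with $n=2$ into $(\a\b)_\n(\a\b\phi)_\n\,\n(4)$. One slip to fix: the lower parameter of ${}_0F_1\left({\atop\a^2};\l\right)$ is $\e+\a^2$, so each denominator is $(\e+\a^2)^\0_\m$, not $(\a^2)^\0_\m$ (and on the right it is $(\e+\a^2+\b^2+\a^2\b^2)^\0_\n$); these $(\e)^\0$ factors are precisely where the $\d$-corrections you promise to track live, so they cannot be dropped, though restoring them does not change the structure of the argument.

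The genuinely wrong turn is your primary route to the second identity via \eqref{g8} with $\vp=\phi$. Shifting every parameter by $\phi$ sends the lower multiset $\e+\a^2+\b^2+\a^2\b^2$ to $\phi+\a^2\phi+\b^2\phi+\a^2\b^2\phi$: the unit character becomes $\phi$ and $\a^2\b^2$ becomes $\a^2\b^2\phi$, so the result is not the ${}_2F_3$ with lower parameters $\a^2\phi,\b^2\phi,\a^2\b^2$; likewise $F(\,\cdot\,,\e+\a^2;\l)$ becomes $F(\,\cdot\,,\phi+\a^2\phi;\l)$, which is not ${}_0F_1\left({\atop\a^2\phi};\l\right)$. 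So the second formula is not a $\phi$-shift of the first, and no bookkeeping of correction factors will rescue that deduction. Your fallback --- rerun the same coefficient computation with lower parameters $\a^2\phi,\b^2\phi$ --- is exactly what the paper does (``the proof of the second formula is parallel''); it goes through because \eqref{g11} now produces $g(\a^2\phi\cdot\b^2\phi\cdot\n^2)=g(\a^2\b^2\n^2)$, which splits under duplication into the same pair $\a\b,\a\b\phi$, and the same hypotheses again rule out the degenerate case. With that route taken and the $(\e)^\0$ factors reinstated, the proposal is correct and coincides with the paper's argument.
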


\begin{proof}
We prove the first formula; the proof of the second formula is parallel. 
Let $f(\l)$ and $g(\l)$ be the left and right hand sides of the formula, respectively. Since $f(0)=g(0)=0$, it suffices to show that $f=g$ as functions on $\k^*$. 
For any $\n\in\ck$, we have by using \eqref{g1} and \eqref{g3}, 
\begin{align*}
\wh f(\n)& =\frac{1}{q-1} \sum_{\m} \frac{1}{(\e+\a^2)^\0_\m} \frac{1}{(\e+\b^2)^\0_{\ol\m\n}}
\\&=\frac{1}{q-1} \frac{1}{(\e+\b^2)^\0_\n} \sum_\m  \frac{1}{(\e+\a^2)^\0_\m} \frac{1}{(\n+\b^2\n)^\0_{\ol\m}}
\\&=\frac{1}{q-1} \frac{1}{(\e+\b^2)^\0_\n} \sum_\m  \frac{(\ol\n+\ol{\b^2\n})_{\m}}{(\e+\a^2)^\0_\m}
\\&= -  \frac{1}{(\e+\b^2)^\0_\n} {}_2F_1\left({\ol\n,\ol{\b^2\n}\atop \a^2};1\right).
\end{align*}
Since $\ol\n+\ol{\b^2\n} \ne \e+\a^2$ by assumption, we can apply \eqref{g11} and 
\begin{align*}
\wh f(\n)& = -  \frac{1}{(\e+\b^2)^\0_\n} \frac{g^\0(\a^2)g(\a^2\b^2\n^2)}{g^\0(\a^2\n)g^\0(\a^2\b^2\n)}
\\&=-\frac{(\a^2\b^2)_{\n^2}}{(\e+\a^2+\b^2+\a^2\b^2)^\0_\n}
\\&=-\frac{(\a\b+\a\b\phi)_\n}{(\e+\a^2+\b^2+\a^2\b^2)^\0_\n} \n(4)=\wh g(\n),
\end{align*}
where we used \eqref{g5} with $n=2$.  Hence we have $f=g$. 
\end{proof}

The following is a finite analogue of \eqref{b4}.
\begin{thm}\label{p4}
If $p \ne 2$, then for any $\a\in\ck$,  
\begin{equation*}
{}_0F_1\left({\atop \a^2};\l\right) {}_0F_1\left({\atop \a^2};-\l\right)  
=
{}_0F_3\left({\atop \a^2,\a,\a\phi};-\frac{\l^2}{4}\right), 
\end{equation*}
\end{thm}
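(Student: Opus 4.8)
The plan is to compare Fourier transforms, in the spirit of Theorem~\ref{p3}. Write $f(\l)$ and $g(\l)$ for the left- and right-hand sides; since $f(0)=g(0)=0$ it suffices to prove $f=g$ on $\k^*$, and since a function on $\k^*$ is recovered from its Fourier transform, it is enough to prove $\wh f=\wh g$. I would begin with a symmetry reduction: because $f(-\l)=f(\l)$ and $g(-\l)=g(\l)$, both $f$ and $g$ are even functions on $\k^*$, so $\wh f(\n)=\n(-1)\wh f(\n)$ and $\wh g(\n)=\n(-1)\wh g(\n)$; hence both transforms vanish on $\n$ with $\n(-1)=-1$, that is, on non-square characters. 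It therefore suffices to prove $\wh f(\n)=\wh g(\n)$ for $\n=\m^2$, $\m\in\ck$.

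The transform of the right-hand side is immediate: expanding the definition of ${}_0F_3$ and using $\m(-\l^2/4)=\m(-1/4)\,\m^2(\l)$, one finds
$$\wh g(\n)=-\sum_{\m^2=\n}\frac{\m(-1/4)}{(\e+\a^2+\a+\a\phi)^\0_\m},$$
a two-term sum (over $\m$ and $\m\phi$) when $\n$ is a square. For the left-hand side I would multiply the two defining series of ${}_0F_1$ and form $\wh f(\n)$, which collapses the double sum over $\ck\times\ck$ to a single sum over $\ck$ (the second factor, having argument $-\l$, contributing a character evaluated at $-1$), and then rearrange by means of \eqref{g1} and \eqref{g3} exactly as in the proof of Theorem~\ref{p3}. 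The outcome should be
$$\wh f(\n)=-\frac{\n(-1)}{(\e+\a^2)^\0_\n}\,{}_2F_1\left({\ol\n,\ol{\a^2\n}\atop \a^2};-1\right).$$
Note that the extra factor $\n(-1)$, absent in Theorem~\ref{p3}, is precisely what moves the relevant hypergeometric value from argument $1$ to argument $-1$; this is why the essential ingredient here is Kummer's summation \eqref{g12} rather than the Euler--Gauss summation \eqref{g11}.

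For $\n=\m^2$ we have $\n(-1)=1$, the two upper parameters become the squares $\ol\m^2$ and $\ol{\a\m}^2$, and the lower parameter $\a^2$ is the quotient $\ol\m^2/\ol{\a\m}^2$ of the two; thus the ${}_2F_1$ lies in the Kummer configuration and \eqref{g12} applies (with ``$\a$'' $=\ol\m$ and ``$\b$'' $=\ol{\a\m}^2$), yielding a sum over $\a'\in\{\ol\m,\ol\m\phi\}$ of two Gauss-sum terms. The remaining task is to match these with $\wh g(\m^2)$ term by term: writing $(\e+\a^2)^\0_{\m^2}$ and $(\e+\a^2+\a+\a\phi)^\0_\m$ out in Gauss sums, collapsing the products $g(\a)g(\a\phi)$ and $g(\a\m)g(\a\m\phi)$ via the Davenport--Hasse relation \eqref{g4} (equivalently \eqref{g5}) with $n=2$, and using the reflection formula \eqref{g2} to replace $g(\ol\m)$ by $q\,\m(-1)/g(\m)$, one should find that the $\a'=\ol\m$ term of $\wh f(\m^2)$ equals the $\m$-term of $\wh g(\m^2)$, and the $\a'=\ol\m\phi$ term the $\m\phi$-term (the prefactor being invariant under $\m\mapsto\m\phi$). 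I expect the main obstacle to be exactly this Gauss-sum bookkeeping, in particular keeping track of the exceptional characters at which $g^\0$ and $g$ differ---where the collapsing identities must be applied with their $q^{\d(\cdot)}$ corrections; once those boundary cases are checked, $\wh f=\wh g$, hence $f=g$, follows.
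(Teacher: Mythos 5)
Your proposal is correct and follows essentially the same route as the paper: reduce by evenness to square characters, express $\wh f$ via a ${}_2F_1(-1)$ value (with the extra $\n(-1)$ factor, which is $1$ on squares), apply the finite Kummer formula \eqref{g12}, and match the two resulting terms with the two terms of $\wh g$. The Gauss-sum bookkeeping you leave as a sketch is exactly what the paper carries out, using \eqref{g2} and then \eqref{g5} with $n=2$ at the level of the $(\cdot)^\0$ Pochhammer symbols, which automatically handles the exceptional characters you flag.
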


\begin{proof}
Let $f(\l)$, $g(\l)$ be as before. Since both sides are even, it suffices to show $\wh f(\n^2)=\wh g(\n^2)$ for all $\n$. First, similarly as before, we have applying \eqref{g12} and \eqref{g5}, 
\begin{align*}
\wh f(\n^2)&=-\frac{1}{(\e+\a^2)^\0_{\n^2}} {}_2F_1\left({\ol{\n^2},\ol{\a^2\n^2} \atop \a^2};-1\right).
\\&=-\frac{1}{(\e+\a^2)^\0_{\n^2}} \sum_{\m^2=\n^2} \frac{g^\0(\a^2) g(\ol\m)}{g(\ol{\n^2})g^\0(\a^2\m)}
\\&=-\frac{1}{(\e+\a^2)^\0_{\n^2}} \sum_{\m^2=\n^2} \frac{g^\0(\a^2) g^\0(\n^2)}{g^\0(\m)g^\0(\a^2\m)} \m(-1)
\\&= - \sum_{\m^2=\n^2} \frac{1}{(\e+\a^2)^\0_\m (\a^2)^\0_{\m^2}}\m(-1)
\\&=- \sum_{\m^2=\n^2} \frac{1}{(\e+\a^2+\a+\a\phi)^\0_\m} \m\left(-\frac{1}{4}\right). 
\end{align*}
On the other hand, 
\begin{align*}
\wh g(\n^2)&=\frac{1}{1-q} \sum_{\m\in\ck}\frac{1}{(\e+\a^2+\a+\a\phi)^\0_\m} \sum_{\l\in\k^*}  \m\left(-\frac{\l^2}{4}\right) \ol{\n^2}(\l)
\\&=\frac{1}{1-q} \sum_{\m\in\ck}\frac{1}{(\e+\a^2+\a+\a\phi)^\0_\m} \m\left(-\frac{1}{4}\right)
 \sum_{\l\in\k^*} \m^2\ol{\n^2}(\l). 
\end{align*}
By the orthogonality of characters, we have $\wh f(\n^2) =\wh g(\n^2)$, hence the theorem. 
\end{proof}

The following are finite analogues of \eqref{b5}, \eqref{b7}, \eqref{b8}, \eqref{b10}, \eqref{b11}, respectively. 
\begin{cor}Suppose that $p\ne 2$. 
\begin{enumerate}
\item
For any $\a\in\ck$,  we have
\begin{equation*}\label{p5}
{}_0F_1\left({\atop \a^2};\l\right) {}_0F_1\left({\atop \ol{\a^2}};-\l\right) 
= q^{\d(\a)} {}_0F_3\left({\atop \phi,\a\phi,\ol\a\phi};-\frac{\l^2}{4}\right). 
\end{equation*}
\item If $(\a^2+\b^2+\a^2\b^2,\e)=0$, then 
\begin{equation*}\label{p7}
{}_2F_0\left({\a^2,\b^2\atop};\l\right) {}_2F_0\left({\a^2,\b^2\atop};-\l\right) 
={}_4F_1\left({\a^2,\b^2,\a\b,\a\b\phi \atop \a^2\b^2};4\l^2\right).
\end{equation*}

\item If $(\a^2+\b^2+\a^2\b^2,\e)=(\a^2,\b^2)=0$, then 
\begin{equation*}\label{p8}
{}_2F_0\left({\a^2, \ol{\a^2}\atop };\l\right) {}_2F_0\left({\b^2, \ol{\b^2}\atop};-\l\right)  
={}_4F_1\left({\a\ol\b\phi, \ol\a\b\phi, \a\b, \ol{\a\b}  \atop \phi};4\l^2\right). 
\end{equation*}

\item If $(\a,\e+\b^2)=(\a^2,\b^2)=0$, then 
\begin{equation*}\label{p10}
{}_1F_1\left({\a\atop \b^2};\l\right) {}_1F_1\left({\a\ol{\b^2} \atop \ol{\b^2}};-\l\right)
={}_2F_3\left({\a\ol\b\phi,\ol\a\b\phi \atop \phi, \b\phi, \ol\b\phi};\frac{\l^2}{4}\right).
\end{equation*}

\item If $(\a^2+\b^2+\a^2\b^2,\e)=(\a^2,\b^2)=0$, then
\begin{equation*}\label{p11.1}
{}_1F_1\left({\a^2\atop \a^4};\l\right){}_1F_1\left({\b^2\atop \b^4};-\l\right)
={}_2F_3\left({\a\b,\a\b\phi \atop \a^2\phi,\b^2\phi,\a^2\b^2};\frac{\l^2}{4}\right).
\end{equation*}
If $(\a^2\phi+\b^2\phi+\a^2\b^2,\e)=(\a^2,\b^2)=0$, then
\begin{equation*}\label{p11.2}
{}_1F_1\left({\a^2\phi \atop \a^4};\l\right){}_1F_1\left({\b^2\phi \atop \b^4};-\l\right)
={}_2F_3\left({\a\b,\a\b\phi \atop \a^2,\b^2,\a^2\b^2};\frac{\l^2}{4}\right). 
\end{equation*}
\end{enumerate}
\end{cor}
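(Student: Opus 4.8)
The plan is to treat all five parts by the Fourier-transform device already used in the proofs of Theorems~\ref{p3} and \ref{p4}. For parts (i)--(iv) the right-hand side is a function of $\l^2$, hence even, so its Fourier transform is supported on the squares; the left-hand side is manifestly even only in (ii) (there the two factors merely differ by $\l\mapsto-\l$), while in (i), (iii), (iv) the vanishing of $\wh f$ at the non-squares is part of what the computation establishes. Thus one must verify $\wh f(\n)=\wh g(\n)$ for all $\n\in\ck$, the content being the evaluation at the squares $\n^2$, $\n\in\ck$. Part (v) we treat separately, without passing through Fourier transforms.

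Two parts reduce to results already in hand. For (v), apply the finite Kummer formula \eqref{p2} to each of the two ${}_1F_1$-factors, with ``$\a$'' replaced by $\a^2,\b^2$ in the first identity and by $\a^2\phi,\b^2\phi$ in the second (legitimate since $(\a^2\phi)^2=\a^4$ and the hypotheses make the characters involved non-trivial). The exponential factors cancel, $\psi(-\tfrac\l2)\psi(\tfrac\l2)=\psi(0)=1$, leaving ${}_0F_1(\a^2\phi;\tfrac{\l^2}{16}){}_0F_1(\b^2\phi;\tfrac{\l^2}{16})$, resp.\ ${}_0F_1(\a^2;\tfrac{\l^2}{16}){}_0F_1(\b^2;\tfrac{\l^2}{16})$; the second, resp.\ first, formula of Theorem~\ref{p3} now applies (its hypotheses are exactly those of (v)), and $4\cdot\tfrac{\l^2}{16}=\tfrac{\l^2}{4}$ gives the claim. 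For (iv), the hypotheses imply those of \eqref{p9}; apply the shift formula \eqref{g8} with $\vp=\ol{\b^2}$ to express ${}_1F_1(\a\ol{\b^2};\ol{\b^2};-\l)$ as a Gauss-sum multiple of $\b^2(\l)\,{}_1F_1(\a;\b^2;-\l)$, so that the left-hand side becomes a constant times $\b^2(\l)$ times the left-hand side of \eqref{p9}; then use \eqref{p9}, and apply \eqref{g8} again (with $\vp=\b$ and $\vp=\phi$) to the ${}_2F_3$ on its right-hand side to obtain ${}_2F_3(\a\ol\b\phi,\ol\a\b\phi;\phi,\b\phi,\ol\b\phi;\tfrac{\l^2}{4})$; what remains is the routine verification, via \eqref{g1}--\eqref{g5}, that the accumulated product of Gauss-sum factors equals $1$. (Part (iv) may also be obtained directly by the computation described next.)

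For (i), (ii), (iii) we compute the Fourier transform directly, exactly as in the proof of Theorem~\ref{p3}: expand each factor of the left-hand side, sum the product against $\ol{\n^2}$, collapse one of the two character variables $\m$ by orthogonality of the additive characters, and rewrite the remaining sum over $\m$ — using \eqref{g1} to factor out the dependence on $\n^2$ and \eqref{g3} to convert the Pochhammer symbols indexed by $\ol\m$ into $^\0$-symbols indexed by $\m$ — as a special value $F(\baa,\bbb;\pm1)$. For (i) this is a ${}_2F_1(-1)$; the parameter $\a^2\ol{\n^2}=(\a\ol\n)^2$ is a square, so the finite Kummer formula \eqref{g12} applies, and its two square-root terms match the two terms in the Fourier transform of the right-hand ${}_0F_3$ (this is also where the vanishing at non-squares comes from, \eqref{g12} not being applicable there). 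For (ii) the remaining sum is a ${}_3F_2(1)$ already in the well-poised shape of the finite Dixon formula \eqref{g13} (with ``$\a^2$'' there equal to $\ol{\n^2}$), which applies directly. For (iii) the remaining sum is a ${}_3F_2(1)$ with a reciprocal pair $\{\a^2,\ol{\a^2}\}$ among its numerator parameters but is not well-poised under the stated hypotheses; bringing it into Dixon's shape by a ${}_3F_2(1)$-transformation (equivalently, invoking a Whipple-type summation deduced from \eqref{g13}) and applying \eqref{g13} finishes the argument. On the right-hand sides, the numerator pair $\{\a\b,\a\b\phi\}$ (resp.\ $\{\a\ol\b\phi,\ol\a\b\phi\}$) is produced by the Davenport--Hasse relation \eqref{g5} with $n=2$.

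The main labour, as in Theorems~\ref{p3} and \ref{p4}, is the bookkeeping of degenerate cases: the parameter configurations where the hypotheses of \eqref{g11} or \eqref{g13} fail and must be treated by hand (for instance $\n^2=\ol{\a^2\b^2}$ when proving (ii)), the verification that $\wh f$ vanishes at the non-squares in (i), (iii), (iv), and the powers of $q$ introduced whenever one passes between $(\cdot)_\n$ and $(\cdot)^\0_\n$ at a character where $\d\ne0$ (through \eqref{g1}--\eqref{g5}). These are exactly what produce the global factor $q^{\d(\a)}$ in (i) and pin down the precise hypotheses ($(\a^2,\b^2)=0$, $(\a^2+\b^2+\a^2\b^2,\e)=0$, and the others) under which the identities hold in the stated form; a secondary point is that in (ii), (iii) the ${}_2F_0$- and ${}_4F_1$-functions are not balanced, so one checks in addition that the dependence on the additive character $\psi$ is the same on both sides (the total $\psi$-weight being $2$ on each).
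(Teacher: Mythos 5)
Most of your plan coincides with or runs parallel to the paper's proof: (v) via \eqref{p2} and Theorem~\ref{p3}, and (iv) via \eqref{g8} and \eqref{p9}, are exactly the paper's arguments, and your direct Fourier computations for (i) and (ii) are workable alternatives (the derived ${}_2F_1(-1)$ in (i) is indeed Kummer-shaped with special parameter $(\a\ol\n)^2$, and the ${}_3F_2(1)$ in (ii) is genuinely Dixon-shaped with special parameter $\ol{\n^2}$); the paper instead dispenses with (i) by reducing it to Theorem~\ref{p4} via \eqref{g8}, \eqref{g2}, \eqref{g4}, and with (ii) by reducing it to \eqref{p9} via \eqref{g8}, \eqref{g2}, \eqref{g9}, which avoids all degenerate-case bookkeeping. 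One point in your (i) is not an argument, though: the vanishing of $\wh f$ at non-square $\n$ does not ``come from \eqref{g12} not being applicable'' --- inapplicability of a summation formula proves nothing. You must first show the left-hand side is even; for (i) this follows from \eqref{g8}, since ${}_0F_1(\ol{\a^2};-\l)$ is a Gauss-sum constant times $\a^2(\l)\,{}_0F_1(\a^2;-\l)$ and $\a^2(-1)=1$ --- but once you have written that down you have already reduced (i) to Theorem~\ref{p4}, i.e.\ you have the paper's proof and the Fourier computation is superfluous.

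The genuine gap is in (iii). Your Fourier transform there leads to a ${}_3F_2(1)$ with numerator parameters $\a^2,\ol{\a^2},\ol\n$ and denominator parameters $\ol{\b^2}\ol\n,\b^2\ol\n$: this is of Whipple type (the first two numerator parameters multiply to $\e$, the two denominator parameters multiply to $\ol\n^2$), not of Dixon type, exactly as you note. But the Whipple-type summation, or the ${}_3F_2(1)$-transformation needed to ``bring it into Dixon's shape,'' is neither stated in the paper nor proved by you; over finite fields such a transformation is a nontrivial result with its own degenerate cases (compare the case analysis already needed for \eqref{g11}--\eqref{g13}), so invoking it as ``deduced from \eqref{g13}'' leaves the key lemma missing. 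Moreover the evenness of the left-hand side of (iii), needed for your restriction to squares, is again only asserted. The paper sidesteps all of this: (iii) is shown to be \emph{equivalent} to the first identity of (v), by using \eqref{g9} and \eqref{g8} to convert each ${}_2F_0(\g^2,\ol{\g^2};\pm\l)$ into a constant times a character value times ${}_1F_1(\g^2;\g^4;\mp\l^{-1})$, and likewise to match the ${}_4F_1$ in $4\l^2$ with the ${}_2F_3$ in $\l^{-2}/4$; since (v) is proved from \eqref{p2} and Theorem~\ref{p3}, only the Euler--Gauss formula \eqref{g11} is ever needed. You should either supply and prove the finite-field Whipple summation you invoke, or replace your treatment of (iii) by this reduction.
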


\begin{proof}Since all the formulas are obviously true for $\l=0$, we assume that $\l\ne 0$. 

(i) By \eqref{g8}, \eqref{g2} and \eqref{g4} with $n=2$, we have 
$${}_0F_1\left({\atop \ol{\a^2}};-\l\right)= \frac{g^\0(\ol{\a^2})}{g^\0(\a^2)} \a^2(\l){}_0F_1\left({\atop \a^2};-\l\right), 
$$
and 
\begin{align*}
{}_0F_3\left({\atop \phi,\a\phi,\ol\a\phi};-\frac{\l^2}{4}\right)
&= 
\frac{g^\0(\phi)g^\0(\ol\a\phi)}{g^\0(\a)g^\0(\a^2)}
\a\phi\left(-\frac{\l^2}{4}\right)
{}_0F_3\left({\atop \a^2,\a,\a\phi};-\frac{\l^2}{4}\right)
\\&=q^{-\d(\a)} \frac{g^\0(\ol{\a^2})}{g^\0(\a^2)}\a^2(\l) {}_0F_3\left({\atop \a^2,\a,\a\phi};-\frac{\l^2}{4}\right).
\end{align*}
Hence the formula reduces to Theorem \ref{p4}. 

(ii) By \eqref{g8}, \eqref{g2} and \eqref{g9},
\begin{align*}
{}_2F_0\left({\a^2,\b^2\atop};\l\right)
&= \frac{g(\ol{\a^2}\b^2)}{g(\b^2)} \ol{\a^2}(\l) F(\e+\ol{\a^2}\b^2, \ol{\a^2}; \l) 
\\&=\frac{g(\ol{\a^2}\b^2)}{g(\b^2)} \a^2(\l^{-1}) {}_1F_1\left({\a^2 \atop \a^2\ol{\b^2}};-\l^{-1}\right). 
\end{align*}
Similarly, using \eqref{g4}, we have
$${}_4F_1\left({\a^2,\b^2,\a\b,\a\b\phi \atop \a^2\b^2};4\l^2\right)
=\frac{g(\ol{\a^2}\b^2)^2}{g(\b^2)^2} \a^4(\l^{-1}) {}_2F_3\left({\a^2,\ol{\b^2} \atop \a^2\ol{\b^2}, \a\ol\b, \a\ol\b\phi};\frac{\l^{-2}}{4}\right). 
$$
Hence the formula reduces to \eqref{p9}. 

(iii) This is equivalent to the first statement of (v), which we prove below. 

(iv) This reduces easily to \eqref{p9}, using \eqref{g8}. 

(v) By \eqref{p2}, the left hand side of the first formula equals 
$${}_0F_1\left({\atop \a^2\phi};\frac{\l^2}{16}\right) {}_0F_1\left({\atop \b^2\phi};\frac{\l^2}{16}\right),$$
and the formula follows from the second formula of Theorem \ref{p3}. 
The second formula follows similarly from the first formula of Theorem \ref{p3}. 
\end{proof}

Finally, the following is a finite analogue of \eqref{b12}. 
\begin{thm}Suppose that $6 \mid q-1$ and let $\r\in\ck$ be a cubic character, i.e. $\r^3=\e$, $\r\ne \e$. 
If $(\a^6,\b^{12})=(\a^{12},\b^6)=(\a^6\b^6,\e)=0$, then 
\begin{equation*}\label{p12}\begin{split}
& {}_0F_2\left({\atop \a^6,\b^6};\l\right){}_0F_2\left({\atop \a^6,\b^6};-\l\right)
\\& ={}_3F_8\left({\a^2\b^2,\a^2\b^2\r,\a^2\b^2\ol\r \atop\a^6,\b^6,\a^3,\a^3\phi,\b^3,\b^3\phi, \a^3\b^3, \a^3\b^3\phi};-\frac{27\l^2}{64}\right).
\end{split}\end{equation*}
\end{thm}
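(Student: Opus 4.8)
The plan is to imitate the proofs of Theorems~\ref{p3} and~\ref{p4}: compare Fourier transforms over $\k^*$. Write $f(\l)$ and $g(\l)$ for the two sides of the asserted identity. Both are even functions of $\l$ vanishing at $\l=0$, so $\wh f$ and $\wh g$ are supported on the subgroup of squares of $\ck$, and it suffices to prove $\wh f(\n^2)=\wh g(\n^2)$ for every $\n\in\ck$. First I would compute the coefficient of $f$: expanding each ${}_0F_2$ as a sum over $\ck$ and manipulating with \eqref{g1} and \eqref{g3} exactly as in the proof of Theorem~\ref{p3} (the parameter $\e+\a^6+\b^6$ now has degree three, so the extra factors $\m(-1)$ no longer cancel completely), one finds
$$\wh f(\n)=-\frac{\n(-1)}{(\e+\a^6+\b^6)^\0_\n}\,{}_3F_2\left({\ol\n,\ol{\a^6\n},\ol{\b^6\n} \atop \a^6,\b^6};1\right),$$
hence, since $\n^2(-1)=1$,
$$\wh f(\n^2)=-\frac{1}{(\e+\a^6+\b^6)^\0_{\n^2}}\,{}_3F_2\left({\ol{\n^2},\ol{\a^6\n^2},\ol{\b^6\n^2} \atop \a^6,\b^6};1\right).$$

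This ${}_3F_2(1)$ is well poised: its leading upper parameter $\ol{\n^2}$ is a square and it has the shape of the finite Dixon formula \eqref{g13} with $\a^2=\ol{\n^2}$, $\b=\ol{\a^6\n^2}$, $\g=\ol{\b^6\n^2}$. I would check that the hypotheses of \eqref{g13} reduce to $\a^6\b^6\n^2\ne\e$ together with further conditions that are automatic from $(\a^6,\b^{12})=(\a^{12},\b^6)=0$; granting these, \eqref{g13} expresses $\wh f(\n^2)$ as a sum over the two square roots $\m$ of $\n^2$ of an explicit ratio of Gauss sums. On the other side, since $g$ depends on $\l$ only through a constant times $\l^2$, orthogonality of characters gives as in Theorem~\ref{p4}
$$\wh g(\n^2)=-\sum_{\m^2=\n^2}\frac{(\a^2\b^2+\a^2\b^2\r+\a^2\b^2\ol\r)_\m}{(\e+\a^6+\b^6+\a^3+\a^3\phi+\b^3+\b^3\phi+\a^3\b^3+\a^3\b^3\phi)^\0_\m}\,\m\left(-\frac{27}{64}\right),$$
and I would simplify the summand with Davenport--Hasse \eqref{g5}: with $n=3$ (using $g(\r)g(\ol\r)=q$ and $\r(-1)=1$) the three numerator symbols collapse to one involving $\a^6\b^6$ and a factor $\m(3)^{-3}$ appears; with $n=2$ the pairs $(\a^3)^\0_\m(\a^3\phi)^\0_\m$, $(\b^3)^\0_\m(\b^3\phi)^\0_\m$, $(\a^3\b^3)^\0_\m(\a^3\b^3\phi)^\0_\m$ collapse to $(\a^6)^\0_{\m^2}$, $(\b^6)^\0_{\m^2}$, $(\a^6\b^6)^\0_{\m^2}$ and a factor $\m(2)^{6}$ appears. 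Since $\m(-27/64)=\m(-1)\m(3)^3\m(2)^{-6}$, these powers cancel and $\wh g(\n^2)$ becomes a sum over the same square roots $\m$ with a summand built only from Gauss sums of $\a^6,\b^6,\a^6\b^6,\m$.

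It then remains to match the two expressions. Rewriting both with the reflection formula \eqref{g2} and pairing the index $\m$ of $\wh g$ with the square root $\a'=\ol\m$ of $\ol{\n^2}$ occurring in Dixon's formula, the two sums agree term by term once one knows $(\a^6\b^6)^\0_{\m^2}=(\a^6\b^6)_{\m^2}$, i.e. $\d(\a^6\b^6\m^2)=\d(\a^6\b^6)=0$; here the second equality is $(\a^6\b^6,\e)=0$ and the first holds precisely when $\a^6\b^6\n^2\ne\e$. I expect the main obstacle to be exactly the remaining handful of $\n$, namely $\n\in\{\ol{\a^3\b^3},\ol{\a^3\b^3}\phi\}$, where $\a^6\b^6\n^2=\e$ and \eqref{g13} degenerates, together with the degenerate parameter configurations such as $\a^6=\e$ where the unit character appears among the lower parameters on the right. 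For these I would evaluate $\wh f(\n^2)$ directly: the ${}_3F_2$ above then has two upper parameters equal to its two lower ones, so it is computed by the cancellation formula \eqref{g10} with \eqref{g7} and \eqref{g2}, while $\wh g(\n^2)$ is read off from the simplified form just obtained (now $\a^6\b^6\m^3=\m$); verifying that these two direct evaluations coincide is the delicate part of the argument.
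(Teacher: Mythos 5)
Your proposal is correct and follows essentially the same route as the paper's own proof: comparison of Fourier transforms at squares, evaluation of $\wh f(\n^2)$ by the finite Dixon formula \eqref{g13} in the generic case $\a^6\b^6\n^2\ne\e$, simplification of $\wh g(\n^2)$ by Davenport--Hasse \eqref{g5} with $n=2,3$ so that the factors $\m(2)^{\pm 6}$, $\m(3)^{\pm 3}$ cancel against $\m(-27/64)$, and a separate treatment of $\n^2=\ol{\a^6\b^6}$. The one step you leave unfinished---checking that the direct evaluation of the degenerate ${}_3F_2(1)$ via \eqref{g10}, \eqref{g7}, \eqref{g2} matches the other side---is precisely what the paper carries out (splitting into $\a^6\ne\b^6$ and $\a^6=\b^6$), and it goes through; the further degenerations you worry about (e.g.\ $\a^6=\e$) need no separate argument, as the $g^\0$'s absorb them.
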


\begin{proof}
Let $f(\l)$ and $g(\l)$ be as before; these are even functions. 
First, similarly as before, 
\begin{align*}
\wh f(\n^2) &= -\frac{1}{(\e+\a^6+\b^6)^\0_{\n^2} }
{}_3F_2\left({\ol{\n^2},\ol{\a^6\n^2},\ol{\b^6\n^2} \atop \a^6,\b^6};1\right). 
\end{align*}
We show that 
$${}_3F_2\left({\ol{\n^2},\ol{\a^6\n^2},\ol{\b^6\n^2} \atop \a^6,\b^6};1\right)
=\frac{g^\0(\a^6)g^\0(\b^6)}{g(\ol{\n^2})g^\0(\a^6\b^6\n^2)} 
\sum_{\m^2=\n^2} \frac{g(\ol\m)g(\a^6\b^6\m^3)}{g^\0(\a^6\m)g^\0(\b^6\m)}.
$$
If $\n^2\ne \ol{\a^6\b^6}$, this follows by \eqref{g13}. 
Suppose that  $\n^2= \ol{\a^6\b^6}$. 
Then the left hand side becomes, using \eqref{g10}, \eqref{g7} and \eqref{g2}, 
$${}_3F_2\left({\a^6\b^6,\a^6,\b^6\atop \a^6,\b^6};1\right)
=
\begin{cases}
2\dfrac{g^\0(\a^6)g^\0(\b^6)}{g(\a^6\b^6)q} & (\a^6\ne\b^6), \\
\dfrac{g^\0(\a^6)^2}{g(\a^{12})q}(1+q^{-1}) & (\a^6=\b^6). 
\end{cases}
$$
The right hand side becomes by \eqref{g2},  
$$\frac{g^\0(\a^6)g^\0(\b^6)}{g(\a^6\b^6)q}\sum_{\m^2=\n^2} \frac{g(\ol\m)g(\m)}{g^\0(a^6\m)g^\0(\ol{\a^6\m})}
=\frac{g^\0(\a^6)g^\0(\b^6)}{g(\a^6\b^6)q}(1+q^{-(\a^6,\b^6)}). 
$$
Note that $\m \ne \e$ by the assumption $(\a^6\b^6,\e)=0$. 
Hence we have the equality as above, and obtain
\begin{align*}
\wh f(\n^2) &= -\frac{1}{(\e+\a^6+\b^6+\a^6\b^6)^\0_{\n^2} (\e)_{\ol{\n^2}}}\sum_{\m^2=\n^2} 
\frac{(\e)_{\ol\m}(\a^6\b^6)_{\m^3}}{(\a^6+\b^6)^\0_\m}
\\&= -\frac{1}{(\a^6+\b^6+\a^6\b^6)^\0_{\n^2} }\sum_{\m^2=\n^2} 
\frac{(\a^6\b^6)_{\m^3}}{(\e+\a^6+\b^6)^\0_\m}\m(-1). 
\end{align*}
On the other hand, by \eqref{g4} with $n=2$ and $3$, 
\begin{align*}
\wh{g}(\n^2)&= -\sum_{\m^2=\n^2} \frac{(\a^2\b^2+\a^2\b^2\r+\a^2\b^2\ol\r)_{\m}}{(\e+\a^6+\b^6+\a^3+\a^3\phi+\b^3+\b^3\phi+\a^3\b^3+\a^3\b^3\phi)^\0_{\m}} \m\left(-\frac{27}{64}\right)
\\&=-\sum_{\m^2=\n^2} \frac{(\a^6\b^6)_{\m^3}}{(\e+\a^6+\b^6)^\0_\m (\a^6+\b^6+\a^6\b^6)^\0_{\m^2}} \m(-1).
\end{align*}
Hence the theorem follows. 
\end{proof}

\section{Barman--Tripathi formula}

We give a short proof of the finite analogue of Bailey's formula \eqref{b14} due to Barman--Tripathi \cite{tripathi-barman}. 
As well as the original proof, our proof intervenes the finite analogue of Appell's hypergeometric function $F_4$ in two variables defined by 
$$
F_4(\a,\b;\g,\g';\l,\l')=\frac{1}{(1-q)^2}\sum_{\n,\n' \in\ck} \frac{(\a+\b)_{\n\n'}}{(\e+\g)^\0_\n(\e+\g')^\0_{\n'}} \n(\l)\n'(\l'). 
$$
The following is a finite analogue of a complex formula (cf. \cite[9.6]{bailey-book}). 
Let $\d\colon \k \to \C$ denote the characteristic function of $0$, i.e. $\d(0)=1$ and $\d(\l)=0$ if $\l\ne 0$. 

\begin{thm}[{\cite[Theorem 1.2]{tripathi-barman1}}]\label{t1} 
Suppose that $\a\b=\g\g'$ and $(\a+\b,\e+\g)=0$ (so $(\a+\b,\g')=0$). 
Then, if $x\ne 1$ and $y \ne 1$, 
\begin{align*}
&{}_2F_1\left({\a,\b\atop \g};\frac{x}{x-1}\right) {}_2F_1\left({\a,\b\atop \g'};\frac{y}{y-1}\right) 
-\d(1-xy)C \ol\b\g(y) \a(1-x)\b(1-y)
\\&= F_4\left({\a,\b;\g,\g'};\frac{-x}{(1-x)(1-y)},\frac{-y}{(1-x)(1-y)}\right)
\end{align*}
with
$$C=\frac{g^\0(\g)g^\0(\g')}{g(\a)g(\b)}\b\ol\g(-1).$$
\end{thm}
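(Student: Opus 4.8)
The plan is to follow the same strategy used throughout the paper: expand everything as a sum over characters and compare, but here the comparison must be done on the level of a \emph{double} Fourier transform in the two variables. I would first substitute $\l=x/(x-1)$ and $\l'=y/(y-1)$ and use the Euler-type transformation \eqref{p6} (the finite analogue of \eqref{b6}) to rewrite each ${}_2F_1$ on the left. Concretely, $\ol\a\b\g(1-x)^{-1}\,{}_2F_1\!\left({\a,\b\atop\g};\frac{x}{x-1}\right)$ should become, after \eqref{g8} and \eqref{g9}, a cleaner function of $x$ whose Fourier coefficients in $x$ are single Pochhammer ratios; the factor $\a(1-x)\b(1-y)$ appearing in the correction term is exactly what makes this change of variable natural. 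The point of intervening $F_4$ is that, once the variables are disentangled this way, the left side becomes (up to the monomial prefactors $\a(1-x)\b(1-y)$ and a character of $(1-x)(1-y)$) a sum $\sum_{\n,\n'}$ of a product $(\a)_{?}(\b)_{?}/((\e+\g)^\0_\n(\e+\g')^\0_{\n'})$ times monomials in $x,y$ — which is almost the definition of $F_4$ except for how the Pochhammer indices are shared between $\n$ and $\n'$.

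The key computational step is therefore: compute the double Fourier transform (in $x\in\k^*$ and $y\in\k^*$) of each side as a function of a pair $(\sigma,\sigma')\in\ck^2$. On the $F_4$ side, after the change of variables $(1-x)(1-y)$ in the denominator, one expands $\ol{(\a+\b)}$ applied to $(1-x)(1-y)$-type factors using the binomial identity \eqref{g7} (i.e. ${}_1F_0$), which introduces an auxiliary summation; collecting terms, the $x$-transform and the $y$-transform should each reduce to a ${}_2F_1(1)$-type inner sum. Here is where \eqref{g11}, the finite Euler--Gauss summation, enters: the hypothesis $\a\b=\g\g'$ is precisely the degenerate-balanced condition that forces us into the second branch of \eqref{g11} for certain characters, and that degenerate branch $1+q^{\d(\g)}(1-q)$ is what produces the extra delta term $\d(1-xy)$ together with the constant $C=\frac{g^\0(\g)g^\0(\g')}{g(\a)g(\b)}\b\ol\g(-1)$ after the inverse transform in one variable is carried back. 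So the structure of the proof is: (1) normalize both ${}_2F_1$'s via \eqref{p6}; (2) expand the $F_4$ double sum and reorganize so that one of the two character sums is an ${}_2F_1(1)$; (3) apply \eqref{g11}, separating the generic case from the balanced-degenerate case; (4) in the generic case match Fourier coefficients directly; (5) in the degenerate case, identify the leftover sum (which no longer depends on one of the two variables in the expected way) as the Fourier expansion of $\d(1-xy)$ times an explicit monomial, and read off $C$.

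I expect the main obstacle to be bookkeeping in step (3)–(5): isolating exactly which characters $\n$ (equivalently which Fourier frequency $\sigma$) fall into the exceptional branch of \eqref{g11}, and then recognizing the resulting residual sum as the Fourier transform of $\d(1-xy)\,\ol\b\g(y)\a(1-x)\b(1-y)$. The condition $\a\b=\g\g'$ makes a whole one-parameter family of characters degenerate at once, so the "exceptional" contribution is not a single term but itself a character sum, and disentangling it from the main term while keeping track of the $q$-powers in $g^\0$ versus $g$ (and the sign characters $\b\ol\g(-1)$) is delicate. A secondary subtlety is checking the claimed implication "$(\a+\b,\e+\g)=0$ (so $(\a+\b,\g')=0$)": since $\a\b=\g\g'$, if $\g'\in\{\a,\b\}$ then one of $\g=\b$ or $\g=\a$ would hold, contradicting $(\a+\b,\e+\g)=0$ — this needs to be verified at the outset because it guarantees no further degeneracies beyond the single balanced one. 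Once these exceptional cases are pinned down, the generic matching of Fourier coefficients is routine using \eqref{g1}, \eqref{g3} and \eqref{g5}.
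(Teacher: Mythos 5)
Your proposal is essentially the paper's proof: Fourier-transform the $F_4$ side in both variables, expand the factors involving $1-x$ and $1-y$ (your binomial device \eqref{g7} is equivalent to the paper's use of Jacobi sums via \eqref{j2}), recognize the inner character sums as ${}_2F_1(1)$'s, evaluate them by the finite Euler--Gauss formula \eqref{g11}, and observe that the exceptional contributions form a one-parameter family of characters (in the paper's notation $\n'=\ol\b\g\n$) whose inverse transform is exactly $\d(1-xy)\,C\,\ol\b\g(y)$; the generic frequencies then match the product of two ${}_2F_1$'s, converted to the argument $x/(x-1)$ at the end. Two small corrections to your outline. First, the transformation you need in step (1) is Pfaff's, ${}_2F_1\left({\a,\ol\b\g\atop \g};x\right)=\ol\a(1-x)\,{}_2F_1\left({\a,\b\atop \g};\frac{x}{x-1}\right)$ (\cite[Theorem 3.14 (ii)]{otsubo}), not the Euler transformation \eqref{p6}, which leaves the argument at $x/(x-1)$ and hence does not produce a function with simple multiplicative Fourier coefficients in $x$; alternatively, substitute $u=x/(x-1)$, $v=y/(y-1)$, under which the $F_4$ arguments become $u(1-v)$ and $v(1-u)$ and $\d(1-xy)$ becomes $\d(1-u-v)$. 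Second, the delta correction is not produced only by the balanced branch of \eqref{g11}: along the whole family $\n'=\ol\b\g\n$ the generic branch already contributes a factor $q^{-\d(\ol\b\g\n\ol{\n'})}$ (via \eqref{g2}), and the balanced cases are just the two members of that family where the second branch applies; the hypothesis $\a\b=\g\g'$ is what makes the two Gauss-sum arguments inverse to each other and makes the limit value along the family the single constant $qC$. Finally, your expansion also produces residual terms (the paper's $R_1,R_2,R_3$, coming from the $\d$-terms when $\a\m\m'=\e$ or $\b\m\m'=\e$); these must be tracked, and they are harmless precisely because their inverse transforms are supported on $x=1$ or $y=1$, which is where the hypothesis $x\ne 1$, $y\ne 1$ enters.
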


\begin{proof}
Put, for $x \ne 1$, $y \ne 1$,  
$$f(x,y)=\ol\a(1-x)\ol\b(1-y) F_4\left({\a,\b;\g,\g'};\frac{-x}{(1-x)(1-y)},\frac{-y}{(1-x)(1-y)}\right),$$ 
and extend it by zero to a function on the group $(\k^*)^2$. 
Then we have by \eqref{j2}, 
\begin{align*}
\wh f(\n,\n')&=\frac{1}{(1-q)^2} \sum_{\m,\m'} \frac{(\a+\b)_{\m\m'}}{(\e+\g)^\0_\m(\e+\g')^\0_{\m'}} \m\m'(-1) j(\m\ol\n,\ol{\a\m\m'}) j(\m'\ol{\n'},\ol{\b\m\m'})
\\&= \frac{1}{(1-q)^2} \sum_{\m,\m'} 
\frac{(\a+\b)_{\m\m'}}{(\e+\g)^\0_\m(\e+\g')^\0_{\m'}} 
\left(\frac{(\a\m')_\n(\ol\n)_\m}{(\e)^\0_\n(\a\m')_\m} +\d(\a\m\m')(1-q)\m(-1)\right)
\\&\phantom{ \frac{1}{(1-q)^2} \sum_{\m,\m'} \frac{(\a+\b)_{\m\m'}}{(\e+\g)^\0_\m(\e+\g')^\0_{\m'}}} \times
\left(\frac{(\b\m)_{\n'}(\ol{\n'})_{\m'}}{(\e)^\0_{\n'}(\b\m)_{\m'}} +\d(\b\m\m')(1-q)\m'(-1)\right)
\\&
=\frac{1}{(1-q)^2}\sum_{\m,\m'}\frac{(\a)_\n(\b)_{\n'}}{(\e)^\0_\n(\e)^\0_{\n'}} \frac{(\b\n'+\ol\n)_\m(\a\n+\ol{\n'})_{\m'}}{(\e+\g)^\0_\m(\e+\g')^\0_{\m'}}+R_1(\n)+R_2(\n')+R_3
\\&= M(\n,\n')+R_1(\n)+R_2(\n')+R_3, 
\end{align*}
where  
$$M(\n,\n'):=\frac{(\a)_\n(\b)_{\n'}}{(\e)^\0_\n(\e)^\0_{\n'}} 
{}_2F_1\left({\ol\n, \b\n' \atop \g};1\right)
{}_2F_1\left({\a\n,\ol{\n'}\atop \g'};1\right),$$
$R_1$ and $R_2$ are functions on $\ck$, and $R_3$ is a constant. 
We compare $M(\n,\n')$ with
$$N(\n,\n'):= \frac{(\a+\ol\b\g)_\n}{(\e+\g)^\0_\n}  \frac{(\b+\ol\a\g')_{\n'}}{(\e+\g')^\0_{\n'}}.
$$
First, if $\ol\n+\b\n' \ne \e+\g$ and $\a\n+\ol{\n'}\ne\e+\g'$, then by \eqref{g11},  
\begin{align*}
{}_2F_1\left({\ol\n, \b\n' \atop \g};1\right)
&=\frac{(\ol\a\g')_{\n'}}{(\g)^\0_\n}\frac{g(\ol\b\g\n\ol{\n'})}{g^\0(\ol\b\g)}\n'(-1), 
\\
{}_2F_1\left({\a\n,\ol{\n'} \atop \g'};1\right)
&=\frac{(\ol\b\g)_{\n}}{(\g')^\0_{\n'}}\frac{g(\ol\a\g' \ol\n\n')}{g^\0(\ol\a\g')}\n(-1),
\end{align*}
and it follows $M(\n,\n')=q^{-\d(\ol\b\g\n\ol{\n'})}N(\n,\n')$ by using \eqref{g2}. 
If moreover $\ol\b\g\n\ol{\n'}=\e$, then one computes $N(\n,\n')=qC$. 
Hence we have
\begin{equation}\label{e4.1}
M(\n,\n')=N(\n,\n')+\d(\ol\b\g\n\ol{\n'})(1-q)C.
\end{equation}
On the other hand, if $\b\n'+\ol\n=\e+\g$ (then $\a\n+\ol{\n'}\ne \e+\g'$), then   
$$M(\n,\n')=(q^{-\d(\g)}+1-q) C, \quad N(\n,\n')=q^{-\d(\g)}C.$$ 
Hence $M(\n,\n')=N(\n,\n')+(1-q)C$, and the same holds when $\a\n+\ol{\n'}=\e+\g'$. 
Noting that $\ol\b\g\n\ol{\n'}=\e$ in these cases, the relation \eqref{e4.1} always holds. 

Now, the inverse Fourier transform of $\d(\ol\b\g\n\ol{\n'})$ is 
$$\frac{1}{(q-1)^2} \sum_{\n\in\ck} \n(x) \ol\b\g\n(y)=\frac{1}{(q-1)^2} \sum_\n \n(xy) \ol\b\g(y)
=\frac{1}{q-1} \d(1-xy) \ol\b\g(y).$$
The inverse Fourier transform,  as functions on $(\ck)^2$,  of $R_1(\n)$ (resp. of $R_2(\n')$, of $R_3$) 
vanishes if $y \ne 1$ (resp. if $x\ne 1$, if $(x,y)\ne(1,1)$). 
Hence we have
\begin{align*}
f(x,y)&=\frac{1}{(q-1)^2}\sum_{\n,\n'\in\ck} N(\n,\n')\n(x)\n'(y) -\d(1-xy) C \ol\b\g(y)
\\&= {}_2F_1\left({\a,\ol\b\g\atop \g};x\right){}_2F_1\left({\b,\ol\a\g'\atop \g'};y\right)-\d(1-xy) C\ol\b\g(y)
\end{align*}
if $x\ne 1$ and $y\ne 1$. 
By the finite analogue of Pfaff's  transformation formula 
$$ {}_2F_1\left({\a,\ol\b\g\atop \g};x\right) = \ol\a(1-x) {}_2F_1\left({\a,\b \atop \g};\frac{x}{x-1}\right) \quad (x \ne 1)$$
(cf. \cite[Theorem 3.14 (ii)]{otsubo}), the theorem follows. 
\end{proof}

The following is a finite analogue of \eqref{b14}. 
\begin{cor}[{\cite[Theorem 1.1]{tripathi-barman}}]\label{b-t} 
Suppose that $p\ne 2$, $\a^2\b^2=\g\g'$ and $(\a^2+\b^2,\e+\g)=(\a^2\b^2,\e+\g^2)=0$.
Then, 
\begin{align*}
&{}_2F_1\left({\a^2,\b^2\atop \g};\l\right) {}_2F_1\left({\a^2,\b^2\atop \g'};\l\right)
-\d(1-2\l)\frac{g^\0(\g)g^\0(\g')}{g(\a^2)g(\b^2)}  \a\b(4)
\\&={}_4F_3\left({\a^2,\b^2, \a\b, \a\b\phi \atop \a^2\b^2, \g,\g'};4\l(1-\l)\right)+\d(1-\l). 
\end{align*}
\end{cor}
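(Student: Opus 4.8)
The plan is to derive this from Theorem~\ref{t1} by collapsing the two variables of the Appell function into one. I would apply Theorem~\ref{t1} with its $\a,\b$ replaced by $\a^2,\b^2$ — so that its hypotheses $\a^2\b^2=\g\g'$ and $(\a^2+\b^2,\e+\g)=0$ are exactly ours — and specialize $x=y=\l/(\l-1)$. For $\l\in\k^*$ with $\l\ne1$ this places $x$ in $\k^*\setminus\{1\}$, and one has $x/(x-1)=\l$, $1-x=1/(1-\l)$, $1-x^2=(1-2\l)/(\l-1)^2$, and $-x/(1-x)^2=\l(1-\l)$. Hence Theorem~\ref{t1} becomes
$${}_2F_1\left({\a^2,\b^2\atop \g};\l\right){}_2F_1\left({\a^2,\b^2\atop \g'};\l\right)-\d(1-2\l)\,C\,\ol{\b^2}\g(x)\,\a^2\b^2(1-x)=F_4(\a^2,\b^2;\g,\g';\l(1-\l),\l(1-\l)).$$
The middle term is nonzero only at $\l=\tfrac12$, where $x=-1$ and $1-x=2$; there $\b^2\ol\g(-1)\,\ol{\b^2}\g(-1)=1$ and $\a^2\b^2(2)=\a\b(4)$, so it equals $\d(1-2\l)\dfrac{g^\0(\g)g^\0(\g')}{g(\a^2)g(\b^2)}\a\b(4)$, matching the corollary.

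The main work is the evaluation of the diagonal Appell function $F_4(\a^2,\b^2;\g,\g';z,z)$. Writing it as $\dfrac1{(1-q)^2}\sum_\m(\a^2+\b^2)_\m\,\m(z)\,S(\m)$ with $S(\m)=\sum_\n\dfrac1{(\e+\g)^\0_\n(\e+\g')^\0_{\ol\n\m}}$, I would use \eqref{g1} and \eqref{g3} to identify $S(\m)=\dfrac{1-q}{(\e+\g')^\0_\m}\,{}_2F_1\left({\ol\m,\ol{\g'\m}\atop\g};1\right)$. Here one must first check that the Euler--Gauss formula \eqref{g11} is never in its exceptional case, i.e.\ $\{\ol\m,\ol{\g'\m}\}\ne\{\e,\g\}$ for every $\m$: the two ways this could fail force $\g\g'=\e$ or $\g=\g'$, hence $\a^2\b^2$ equal to $\e$ or to $\g^2$, both excluded by $(\a^2\b^2,\e+\g^2)=0$. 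Granting this, \eqref{g11} gives ${}_2F_1\left({\ol\m,\ol{\g'\m}\atop\g};1\right)=\dfrac{g^\0(\g)\,g(\g\g'\m^2)}{g^\0(\g\m)\,g^\0(\g\g'\m)}$; since $\g\g'\m^2=(\a\b\m)^2$, applying the Davenport--Hasse formula \eqref{g4} with $n=2$ to both $g((\a\b\m)^2)$ and $g((\a\b)^2)$, together with $g^\0(\a^2\b^2)=g(\a^2\b^2)$ (valid as $\a^2\b^2\ne\e$), collapses it to $\dfrac{\m(4)(\a\b)_\m(\a\b\phi)_\m}{(\g)^\0_\m(\a^2\b^2)^\0_\m}$. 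Feeding this back into the sum, the factors $\m(4)\,\m(z)$ combine to $\m(4z)$ and one obtains $F_4(\a^2,\b^2;\g,\g';z,z)={}_4F_3\left({\a^2,\b^2,\a\b,\a\b\phi\atop\a^2\b^2,\g,\g'};4z\right)$. With $z=\l(1-\l)$ this is the right-hand side of the corollary, since $\d(1-\l)=0$ for $\l\ne1$.

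Finally I would dispose of $\l=0$ and $\l=1$. For $\l=0$ every hypergeometric term is a sum of values $\n(0)=0$, and $\d(1)=0$, so both sides vanish. For $\l=1$ the ${}_4F_3$ has argument $0$, so the right-hand side is $\d(0)=1$, while the $\d(1-2\l)$-term on the left vanishes; thus it suffices to show ${}_2F_1\left({\a^2,\b^2\atop\g};1\right){}_2F_1\left({\a^2,\b^2\atop\g'};1\right)=1$. This follows from \eqref{g11}: using $\ol{\a^2\b^2}\g=\ol{\g'}$ and $\ol{\a^2\b^2}\g'=\ol\g$ one writes the product as a ratio of Gauss sums and applies \eqref{g2} in the numerator and in the conjugate pairs $\{\ol{\a^2}\g,\a^2\ol\g\}$, $\{\ol{\b^2}\g,\b^2\ol\g\}$ of the denominator (noting $\ol{\a^2}\g'=\b^2\ol\g$, $\ol{\b^2}\g'=\a^2\ol\g$, and $\a^2\ne\g$, $\b^2\ne\g$), obtaining $q^2$ both above and below. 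So the $+\d(1-\l)$ precisely records the value at $\l=1$, and the corollary follows. I expect the main obstacle to be the bookkeeping in the diagonal-$F_4$ evaluation — keeping track of the powers of $q$, the character values $\vp(-1)$ and $\vp(4)$, and the factor $\m(4)$ after the two Davenport--Hasse applications — while the one genuinely delicate point is verifying that \eqref{g11} never lands in its exceptional regime, which is exactly what $(\a^2\b^2,\e+\g^2)=0$ is there to guarantee.
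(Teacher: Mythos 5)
Your proposal is correct and follows essentially the same route as the paper: it specializes Theorem \ref{t1} with $\a,\b$ replaced by $\a^2,\b^2$ at $x=y=\l/(\l-1)$, evaluates the diagonal $F_4$ via \eqref{g1}, \eqref{g3}, \eqref{g11} and \eqref{g4} (which is precisely the paper's subsequent Lemma, with your check that $(\a^2\b^2,\e+\g^2)=0$ rules out the exceptional case of \eqref{g11}), and settles $\l=1$ by \eqref{g11}. The only difference is that you spell out details the paper leaves implicit, such as the computation of the $\d(1-2\l)$-term at $\l=\tfrac12$.
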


\begin{proof}
When $\l \ne 1$, this follows immediately from Theorem \ref{t1} by
replacing $\a$ (resp. $\b$) with $\a^2$ (resp. $\b^2$), 
letting $x=y=\frac{\l}{\l-1}$, 
and applying the lemma below. 
The formula for $\l=1$ follows by using \eqref{g11}. 
\end{proof}

\begin{lem}
Suppose that $p\ne 2$, $\a^2\b^2=\g\g'$ and $(\g\g',\e)=(\g,\g')=0$. 
Then,  
$$F_4(\a^2,\b^2;\g,\g';x,x)={}_4F_3\left({\a^2,\b^2,\a\b,\a\b\phi \atop \a^2\b^2,\g,\g'};4x\right).$$
\end{lem}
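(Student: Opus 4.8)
The plan is to prove the lemma by comparing power-series coefficients in the variable $x$, just as in the earlier product-formula proofs, except that now everything is an honest formal identity in $x$ rather than a Fourier-transform argument: both sides are of the form $\sum_\n c_\n \n(x)$, so it suffices to match the coefficients $c_\n$ for every $\n\in\ck$. First I would write out the left-hand side. By definition
\[
F_4(\a^2,\b^2;\g,\g';x,x)=\frac{1}{(1-q)^2}\sum_{\n,\n'\in\ck}\frac{(\a^2+\b^2)_{\n\n'}}{(\e+\g)^\0_\n(\e+\g')^\0_{\n'}}\,\n(x)\n'(x),
\]
and after reindexing by $\lambda=\n\n'$ the coefficient of $\lambda(x)$ is
\[
c_\lambda=\frac{(\a^2+\b^2)_\lambda}{1-q}\cdot\frac{1}{1-q}\sum_{\n\in\ck}\frac{1}{(\e+\g)^\0_\n(\e+\g')^\0_{\ol\n\lambda}}.
\]
The inner sum is, up to the usual shift manipulations via \eqref{g1} and \eqref{g3}, a value of a ${}_2F_1$ at $1$ with lower parameter $\g'\lambda$ (or $\g\lambda$) and numerator parameters $\ol\n,\ldots$; this is exactly the shape handled by the Euler--Gauss evaluation \eqref{g11}. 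So the first key step is to reduce that inner sum to a closed product of Gauss sums.

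The second key step is to check that the non-generic case of \eqref{g11} does not occur, or contributes correctly. The hypothesis $(\g,\g')=0$ together with $\a^2\b^2=\g\g'$ should guarantee that the ``$\a+\b=\e+\g$'' alternative in \eqref{g11} is never triggered for the relevant characters; I would spell this out, since it is the place where the precise hypotheses of the lemma get used. Once the inner sum is evaluated, $c_\lambda$ becomes a ratio of Gauss sums in $\a^2,\b^2,\g,\g',\lambda$. Using \eqref{g2} to clear the $g^\0$'s and, crucially, the Davenport--Hasse formula \eqref{g4}/\eqref{g5} with $n=2$ to rewrite $(\a^2\b^2)_{\lambda}$-type factors as $(\a\b)_?(\a\b\phi)_?$ — together with an extra $\lambda(4)$ coming from the $\n^n(n)$ factor — I expect $c_\lambda$ to collapse to
\[
c_\lambda=-\,\frac{(\a^2+\b^2+\a\b+\a\b\phi)_\lambda}{(\e+\a^2\b^2+\g+\g')^\0_\lambda}\,\lambda(4),
\]
which is precisely the coefficient of $\lambda(x)$ in $\sum_\lambda$ expanding ${}_4F_3\left({\a^2,\b^2,\a\b,\a\b\phi\atop\a^2\b^2,\g,\g'};4x\right)$. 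Matching these gives the lemma.

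The main obstacle, I expect, is bookkeeping rather than conceptual: carefully tracking the $q^{\d(\cdot)}$ factors hidden in the $g^\0$'s and the $(\cdot)^\0$'s, and verifying that the exceptional/degenerate branches of \eqref{g11} either don't arise under the hypotheses $(\g\g',\e)=(\g,\g')=0$ or cancel. A secondary subtlety is the constant (the $\lambda=\e$ term): since $F_4(\cdots;0,0)=0$ and the ${}_4F_3$ also vanishes at $x=0$ by definition, the $\lambda=\e$ coefficients must be compared separately, but both are forced to agree because each expression is literally $\frac{1}{1-q}\sum_\lambda c_\lambda\lambda(x)$ with $c_\e$ being whatever the formula above gives at $\lambda=\e$; no boundary correction term is needed here, in contrast to Theorem~\ref{t1}. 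If it turns out that a degenerate branch does contribute, the fallback is to invoke \eqref{g10} to absorb the cancellation of common parameters, exactly as in the proof of the ${}_3F_2$ identity inside the proof of the ${}_0F_2\cdot{}_0F_2$ theorem above.
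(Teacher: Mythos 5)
Your proposal follows essentially the same route as the paper's proof: expand $F_4$, reindex by $\lambda=\n\n'$, evaluate the inner sum as a ${}_2F_1(1)$ via the Euler--Gauss formula \eqref{g11} (whose degenerate branch is excluded exactly by $(\g\g',\e)=(\g,\g')=0$, as you anticipated), and then use Davenport--Hasse with $n=2$ to turn $(\a^2\b^2)_{\lambda^2}$ into $(\a\b)_\lambda(\a\b\phi)_\lambda\,\lambda(4)$, matching the ${}_4F_3$ coefficients. Apart from a cosmetic normalization wobble in your displayed $c_\lambda$ (a stray $-1$ versus the $\tfrac{1}{1-q}$ prefactor), the argument is correct and coincides with the paper's.
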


\begin{proof}
The left hand side is written as 
\begin{align*}
&\frac{1}{(1-q)^2} \sum_{\n,\n'} \frac{(\a^2+\b^2)_{\n\n'}}{(\e+\g)^\0_\n(\e+\g')^\0_{\n'}} \n\n'(x)
\\&= \frac{1}{(1-q)^2} \sum_\m (\a^2+\b^2)_\m \m(x) \sum_\n \frac{1}{(\e+\g)^\0_\n(\e+\g')^\0_{\m\ol\n}}
\\&=\frac{1}{1-q} \sum_\m \frac{(\a^2+\b^2)_\m}{(\e+\g')^\0_\m} \m(x) {}_2F_1\left({\ol\m,\ol{\g'\m}\atop \g};1\right), 
\end{align*}
where we used \eqref{g1} and \eqref{g3}.
Since $\ol\m+\ol{\g'\m} \ne \e+\g$ by assumption, we have by \eqref{g11} and \eqref{g4},  
$${}_2F_1\left({\ol\m,\ol{\g'\m}\atop \g};1\right) =\frac{(\a^2\b^2)_{\m^2}}{(\a^2\b^2+\g)^\0_\m}
 =\frac{(\a\b+\a\b\phi)_\m}{(\a^2\b^2+\g)^\0_\m} \m(4).$$
Hence the lemma follows. 
\end{proof}

\section*{Acknowledgements}
This paper is largely based on the second author's master's thesis \cite{senoue}. 
We would like to thank Ryojun Ito, Akio Nakagawa and Yusuke Nemoto for helpful discussions. 
The first author is supported by JSPS Grant-in-Aid for Scientific Research: 18K03234.



\end{document}